\numberwithin{equation}{section}
\newtheorem{theorem}{Theorem}[section]
\newtheorem*{theorem*}{Theorem}
\newtheorem{lemma}[theorem]{Lemma}
\newtheorem{claim}[theorem]{Claim}
\newtheorem*{observation*}{Observation}
\theoremstyle{definition}{

\newtheorem*{remark*}{Remark}
}
\newcommand{\N}{\mathbb N}
\newcommand{\eps}{\varepsilon}
\newcommand{\ra}{\rightarrow}
\renewcommand{\and}{\hbox{ {\rm and} }}
\newcommand{\E}{\mathbb{E}}
\renewcommand{\P}{\mathbf{P}}
\DeclareMathOperator{\var}{Var}
\DeclareMathOperator{\cov}{Cov}
\newcommand{\cF}{\mathcal{F}}
\newcommand{\tcov}{\tau_{\rm cov}}
\newcommand{\thit}{\tau_{\rm hit}}
\renewcommand{\epsilon}{\varepsilon}
\newcommand{\be}{\begin{eqnarray}}
\newcommand{\ee}{\end{eqnarray}}
\date{}
\begin{document}
\title{Linear cover time is exponentially unlikely}

\author{Itai Benjamini}
\address{Itai Benjamini\hfill\break
Weizmann Institute of Science}
\email{itai.benjamini@weizmann.ac.il}
\urladdr{}

\author{Ori Gurel-Gurevich}
\address{Ori Gurel-Gurevich\hfill\break
University of British Columbia}
\email{origurel@math.ubc.ca}
\urladdr{}

\author{Ben Morris}
\address{Ben Morris\hfill\break
University of California, Davis}
\email{morris@math.ucdavis.edu}
\urladdr{}

\begin{abstract}
We show that the probability that a simple random walk covers a finite, bounded degree graph in linear time is exponentially small.

More precisely, for every $D$ and $C$, there exists $\alpha=\alpha(D,C)>0$ such that for any graph $G$, with $n$ vertices and maximal degree $D$, the probability that a simple random walk, started anywhere in $G$, will visit every vertex of $G$ in its first $C n$ steps is at most $e^{-\alpha n}$.

We conjecture that the same holds for $\alpha$ that does not depend on $D$, provided that the graph $G$ is simple.
\end{abstract}

\maketitle




\section{Introduction}

Let $G=(V,E)$ be a finite connected graph, let $\{X_t\}_{t=0}^\infty$ be a simple random walk on $G$ started at $X_0=v$. Let $\tcov$ be the \emph{cover time} of the walk, i.e. the first time $t$ such that for every $v\in G$ there is $s\le t$ such that $X_s=v$. Our main result is:

\begin{theorem} \label{mainthm}
For every $D$ and $C$, there exists $\alpha=\alpha(D,C)>0$ such that for any graph $G$, with $n$ vertices and maximal degree $D$, and every starting vertex $v\in V$ we have
$$\P_v(\tcov\le C n) \le e^{-\alpha n} \, .$$
\end{theorem}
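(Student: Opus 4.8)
The plan is to reduce the cover-time estimate to a statement about a linearly large, well-separated set of targets, and then to show that visiting all of them within time $Cn$ forces a linear number of conditionally independent local ``delay'' events to succeed simultaneously, which is exponentially unlikely. First I would pass to a net. Let $S\subseteq V$ be a maximal set of vertices that are pairwise at distance at least $3$. Since $G$ has maximal degree $D$, every ball of radius $2$ contains at most $1+D+D(D-1)=:\kappa$ vertices, so maximality forces $|S|\ge n/\kappa=:\delta n$, while the radius-$1$ balls $B(u)=\{u\}\cup N(u)$, for $u\in S$, are pairwise disjoint. Covering $G$ by time $Cn$ in particular visits every $u\in S$, so writing $m=|S|\ge \delta n$ it suffices to prove
$$\P_v\bigl(\text{every }u\in S\text{ is visited by time }Cn\bigr)\le e^{-\alpha n}.$$

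I would keep in mind two extreme regimes, since any mechanism must handle both uniformly. In a diffusive graph (e.g.\ a path) the range after $Cn$ steps is only $O(\sqrt{n})$, so even reaching all net points is essentially impossible by a large-deviation estimate for the maximum of the walk. In a rapidly mixing graph (e.g.\ an expander) each step hits a previously-unvisited net point with conditional probability at most $(\text{number remaining})/n$, so after $Cn$ steps one expects $\approx \delta n\,e^{-C}=\Omega(n)$ net points to remain unvisited, and this count concentrates. Both regimes thus make the event above exponentially rare, but for structurally different reasons, which is precisely why a single geometry-free mechanism is needed.

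For the core bound I would order the first visits to net points by stopping times $T_1\le T_2\le\cdots$, where $T_k$ is the first time the $k$th \emph{distinct} net point is visited (and $T_k=\infty$ if fewer than $k$ are ever visited), so that the event in question is exactly $\{T_m\le Cn\}$ and $T_m=\sum_{k=1}^m (T_k-T_{k-1})$. Applying the strong Markov property at each $T_{k-1}$, I would extract from the excursion that realizes the $k$th fresh visit a local trial: using the disjointness of the balls $B(u)$ together with the fact that distinct net points lie at distance at least $3$, the walk must either make a geometric number of returns to the just-visited ball before escaping, or fail to reach a fresh net point on its first attempt — an event of conditional probability at least some $c=c(D)>0$ given the past. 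This injects, for a constant fraction of the indices $k$, independent spread into the increments $T_k-T_{k-1}$. I would then dominate $T_m$ below by a sum of $\Omega(n)$ such conditionally independent contributions and close with a Chernoff/Azuma estimate on the associated supermartingale, so that $\P_v(T_m\le Cn)\le e^{-\alpha n}$ with $\alpha=\alpha(D,C)>0$, which is Theorem~\ref{mainthm}.

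The hard part will be the construction in the previous paragraph: obtaining a \emph{linear} number of trials whose success probabilities are bounded away from $1$ \emph{and} which are genuinely conditionally independent, \emph{uniformly} over all graphs of maximal degree $D$. The tension is exactly the dichotomy above. In the mixing regime the conditional means of the gaps $T_k-T_{k-1}$ grow as coupons accumulate, so per-net-point trials degrade and one must instead exploit the $\Omega(n)$ net points still missing at the end; in the diffusive regime the walk is strongly correlated over long stretches, so extracting independence requires cutting the trajectory at well-chosen regeneration or exit times rather than at arbitrary ones. Reconciling these — likely through an effective-resistance or excursion-decomposition argument that lower-bounds the typical time to acquire each fresh net point while preserving enough independence — is where the substantive work lies; the final concentration step is then routine.
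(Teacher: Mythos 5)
Your reduction to a distance-$3$ net and the concluding Azuma step are fine, but the core of the proof --- the construction of ``a linear number of trials whose success probabilities are bounded away from $1$'' --- is exactly what is missing, and as you set it up it cannot be completed. The quantitative problem is this: your net has $m\le n$ points, so to rule out $T_m\le Cn$ you need the \emph{average} gap $T_k-T_{k-1}$ to exceed $C$, an arbitrarily large constant, whereas the local trials you extract (returns to a radius-$1$ ball, failure of a first attempt) can only produce delays of size $O_D(1)$: on a bounded-degree expander the walk escapes any radius-$1$ ball and reaches a fresh net point in $O(1)$ expected steps for the first half of the coupons. Enlarging the balls to radius $r$ does not help in your accounting, because the net then shrinks to roughly $n/D^{2r}$ points and the required average gap grows faster than the delay you gain. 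Your fallback for the mixing regime --- ``each step hits a previously-unvisited net point with conditional probability at most (number remaining)$/n$'' --- is simply false in a general bounded-degree graph (the walk may at any moment be adjacent to an unvisited net point), so the two-regime dichotomy you describe cannot be reconciled along these lines; you have correctly located the difficulty, but the proposal does not solve it.

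The paper closes this gap with two ideas your sketch lacks. First, a uniform local lemma (Lemma \ref{mainlem}): there is $r=r(D,C)$ such that the expected number of visits to $v$ before the walk covers \emph{and exits} $B_v(r)$ is at least $C$, for every bounded-degree graph and, crucially, for every walk that is only required to be simple \emph{inside} $B_v(r)$ --- this adversarial-outside formulation is what makes the estimate survive conditioning on the rest of the trajectory, and its proof (an induction on the quantity $\phi(r)$, occupying the paper's Section 3) is the real work. Second, a different accounting: instead of gaps between first visits, the paper uses the identity $t=\sum_v \ell^v_t$ together with the process $L^v_t(r)=\inf_Y \E\bigl(\ell^v(r)(Y)\mid \cF_t\bigr)$, the infimum taken over all $B_v(r)$-simple continuations of the observed path. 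This process is a submartingale with bounded differences, $L_0=\sum_v L^v_0(r)\ge 3Cn$, and deterministically $L_t\le t$ once every local covering time $\tcov^*(B_v(r))$ has passed; Azuma applied to $L_{2Cn}$ then gives the theorem in one stroke. The infimum construction is precisely the device that replaces your unobtainable ``conditionally independent trials'': it makes each local estimate immune to whatever the walk does elsewhere, which is exactly the uniformity over graphs and histories that your proposal identifies as the hard part but leaves open.
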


In certain special cases, the result follows from a direct application
of Hoeffding's inequality. For example, if the graph is a path of length $n$
then the probability to hit the end of the path within $Cn$ steps is exponentially small. However, this approach fails in general since more typically
there is a fixed probability to have hit any specific vertex by time $Cn$.

A naive approach to this problem would be to consider the Doob martingale of some related random variable. Natural choices include either the cover time itself or the number of uncovered vertices. However, these martingales could have large differences. For example when considering a simple random walk on a complete binary tree of height $h$, if the walk has already covered half of the tree and is now at the root, the next step would cause a very large change in the value of either of these martingales.

The proof of Theorem \ref{mainthm} relies heavily on the following fact: The expected number of visits to a vertex $v$ before covering $B_v(r)$ (the ball of radius $r$ around $v$) grows to infinity with $r$, \textbf{even when we allow the walk to behave arbitrarily outside of $B_v(r)$}. To make this more precise, let us make some definitions.

A stochastic process $X_t$ on the vertices of $G$ is said to be a \emph{random walk} if $X_{t+1}$ is a neighbor of $X_t$, almost surely. For a subset of the vertices $S\subset V$, a random walk in $S$-\emph{simple} if the distribution of $X_{t+1}$ given the history $X_0,\ldots,X_t$ is uniform on the neighbors of $X_t$ whenever $X_t \in S$.

For $X$ a random walk on $G$ and $S$ a subset of vertices let $\tcov^*(S)$ be the first time $t$ such that $X_t \not \in S$ and for every $v\in S$ there is $s\le t$ such that $X_s=v$. Let $\ell^v_t=|\{s < t \mid X_s=v\}|$ be the number of visits to $v$ until time $t$.

\begin{lemma} \label{mainlem}
For every $D$ and $C$, there exists $r=r(D,C)$, such that if $G$ is a graph of maximal degree at most $D$ and $v$ is a vertex of $G$ such that $B_v(r)\ne V$, then any $B_v(r)$-simple random walk, started outside $B_v(r)$ satisfies
$$\E(\ell^v_{\tcov^*(B_v(r))})\ge C \, .$$
\end{lemma}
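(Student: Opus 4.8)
The plan is to estimate $\E(\ell^v_{\tcov^*})$ through the excursion structure of the walk at $v$, exploiting that on $B_v(r)$ the walk is genuinely simple. For $\rho\le r$ write $\partial_\rho=\{u:d(v,u)=\rho\}$ for the sphere of radius $\rho$. The basic quantity I would compute is the expected number of visits to $v$ made by a simple random walk \emph{started at $v$} before it hits $\partial_\rho$; since the walk is simple throughout $B_v(\rho)$, the standard escape‑probability / effective‑resistance identity evaluates this exactly as $\deg(v)\,\Reff(v\leftrightarrow\partial_\rho)$. Because the process must leave $B_v(r)$ before time $\tcov^*$, the strong Markov property applied at the first visit $\sigma$ to $v$ shows that after $\sigma$ the walk takes simple steps until it reaches $\partial_r$, and all visits to $v$ accrued in that phase precede $\tcov^*$; hence
\[
\E(\ell^v_{\tcov^*})\ \ge\ \deg(v)\,\Reff(v\leftrightarrow\partial_r).
\]
This already settles every graph for which the resistance to distance $r$ grows with $r$ — paths, and more generally balls of sub‑exponential growth — by choosing $r$ large enough that $\Reff(v\leftrightarrow\partial_r)\ge C$.

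The real difficulty, and the reason a pure hitting estimate cannot suffice, is the regime in which $\Reff(v\leftrightarrow\partial_r)$ stays bounded as $r\to\infty$: the ball is tree‑like or expander‑like, the walk reaches $\partial_r$ after only $O(1)$ returns to $v$, and, worse, the adversarial dynamics outside $B_v(r)$ may re‑enter near an arbitrary boundary vertex, so coverage need not be routed through the center. It is exactly here that one must use that the walk \emph{covers}, rather than merely \emph{reaches}, $B_v(r)$. The approach I would take is to fix a ladder of radii $\rho_1<\dots<\rho_k$ with $k=\Theta(r)$ and to bound below, scale by scale, the expected number of visits to $v$ incurred while the walk completes the coverage of each successive shell $\partial_{\rho_j}$: reversibility together with bounded degree should let me transfer the \emph{guaranteed} visits to the (many) vertices of $\partial_{\rho_j}$ into a positive expected number of returns to the center, after which one sums the $k$ contributions to obtain growth in $r$.

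The step I expect to be the crux is making this per‑scale contribution both bounded below by a constant and robust to everything that happens outside $B_v(r)$. The naive identity ``visits to $v\approx\pi(v)\cdot(\text{time spent in the ball})$'' is exactly what the adversary can defeat by biasing the walk away from the center, so the argument must instead extract the constant from the local simplicity of the walk on each annulus and from the fact that, in bounded degree, a single central excursion can make only limited progress towards covering a whole shell. Quantifying this uniformly over all graphs of maximal degree at most $D$, and over all admissible continuations of the walk outside $B_v(r)$, is where the hypotheses are genuinely needed and is, I expect, the heart of the proof.
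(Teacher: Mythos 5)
Your opening estimate is correct and matches the first step of the paper's argument: since the walk must visit $v$ before covering $B_v(r)$, the strong Markov property plus simplicity inside the ball give $\E\big(\ell^v_{\tcov^*(B_v(r))}\big)\ge d_v\,\Reff(v\leftrightarrow A_v(r))$, which is exactly the quantity the paper calls $a^v_v(r)$ and disposes of at the start of its proof of Lemma \ref{mainlem} (if $a^v_v(R)$ is already large, we are done). Even here, though, your claimed scope is off: bounded resistance is not confined to exponentially growing balls --- $\Z^3$ has polynomial growth yet $\Reff(v\leftrightarrow A_v(r))=O(1)$ --- so ``sub-exponential growth'' is not settled by this step. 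The real problem is that everything after this point in your write-up is a plan rather than a proof: you explicitly defer the ``crux,'' namely a per-scale lower bound on returns to $v$ that is uniform over all adversarial continuations outside the ball, and that crux is the entire content of the lemma.

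Moreover, the plan as stated cannot be carried out. You ask for $k=\Theta(r)$ scales, each contributing a constant expected number of returns to $v$; summing would give $\E(\ell^v(r))=\Omega(r)$. This is false even with no adversary: on a $3$-dimensional torus, covering $B_v(r)$ takes $\Theta(r^3\log r)$ steps, during which the walk spends only a $\Theta(r^{-3})$ fraction of its time at $v$, so the expected number of visits to $v$ before covering is $\Theta(\log r)$ --- precisely the example the paper cites to show that $r(D,C)$ must grow exponentially in $C$. So no correct argument can extract a constant per linearly-spaced scale; at best one could hope for a constant per geometric scale, and whether even that survives the adversary is posed in the paper as an open question. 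The paper's actual mechanism is genuinely different and much more modest: a bootstrap showing that if $\phi(r):=\min \E(\ell^v(r))/d_v = K$, then $\phi(R)\ge K+e^{-3Kd_v-4}$ for a much larger $R$. The increment is not a constant: it is the probability, bounded below via Lemma \ref{bit_process}, that $v$ is still unvisited by the time an expected-visit ``budget'' of $Kd_v$ has accrued over excursions between visits to an intermediate annulus $A_v(R')$, in which case covering forces at least one additional visit. Making this work requires choosing $R'$ so that every single-excursion expectation $a^v_w(R')$ is small (Lemma \ref{small_expectations}), the reversibility identity $\sum_w d_w a^v_w(R')=d_v$ (Lemma \ref{reversibility}), which is the rigorous form of your ``transfer'' intuition, and, crucially, a concentration estimate (Lemma \ref{weak_concentration}, built from the submartingales $L^v_t(r)$) guaranteeing that the budget does accrue before the annulus is covered. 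That exponentially small, $K$-dependent increment --- which your constant-per-scale assumption wishes away --- is what makes the induction close, and it is the heart of the proof.
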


The proof of Theorem \ref{mainthm} then proceeds by constructing a certain submartingale (which is reminiscent of the Doob martingale), which bounds the cover time from below, has expectation $2Cn$ and has bounded differences. Then by Hoeffding's bounds, the value of this submartingale at time $Cn$ is exponentially unlikely to be less then $Cn$, which means that the walk hasn't covered the graph by this time.

Lemma \ref{mainlem} is of interest in itself. For example, a direct consequence is the well-known fact that the expected cover time of bounded degree graphs grows superlinearly in the number of vertices (see subsection \ref{related}). A more subtle implication is that for this result to hold one only needs the random walk to be simple in the vicinity of some constant fraction of the vertices. In particular, the cover time of random walk on a bounded degree graph which is simple in all but a sublinear number of vertice is also superlinear. In fact, our main Theorem applies to these kind of random walks as well.

An interesting open question is to determine the right quantitative version of \ref{mainlem}. One can obtain an exponential lower bounded for $r$ in terms of $C$ (and fixed $D$) by considering a simple random walk on a $d$-dimensional torus, for $d\ge 3$. The question is whether the power to change the behavior of the walk outside of $B_v(r)$ can reduce significantly the expected number of visits to $v$ before covering.


\subsection{Related Works} \label{related}

The cover time of a simple random walk on graphs is a fairly natural concept which has been studied extensively in the past 30 years. Almost all results about the cover time are about its expectation. The most important lower bound is that of Feige \cite{Feige} who proved that $\E_u(\tcov)\ge (1-o(1))n \log n$ for any simple graph on $n$ vertices and any starting vertex $u$. This implies that the probability to cover the graph in $Cn$ steps cannot be more than $O(C/\log n)$ uniformly for all vertices.

The only concentration-type result the authors are aware of is that of Aldous \cite{Aldous} who proved that if $\max_{u,v}\E_u(\thit^v) \ll \max_u \E_u(\tcov)$ (where $\thit^v$ is the first time the walk visits $v$) then for any starting vertex $u$ we have $\tcov / \E_u(\tcov) \to 1$ in distribution. Notice that our main result applies for any bounded degree graph, even if the cover time is not concentrated around its mean.

The interested reader is referred to \cite{LPW, AF} for further information about the cover time. More information about the importance of cover times in Computer Science can be found in \cite{Z}.

\section{Proof of the main Theorem}

Given a graph $G=(V,E)$, a vertex $v \in V$ and $r\in \N$ let $A_v(r)$ be the annulus of radius $r$ around $v$ and assume that $A_v(r)\ne \emptyset$.  (For the convenience of the reader, we have included a legend of notation on the last page.) Given a walk $X_t$ on $G$ let $\cF_t=\sigma(X_0,\ldots,X_t)$ and let $\ell^v(r)=\ell^v_{\tcov^*(B_v(r))}$ be the number of visits to $v$ before covering and exiting $B_v(r)$ (or $\infty$ if the walk never covers $B_v(r)$). Define
\be \label{submardef} L^v_t(r) = \inf \, \E(\ell^v(r)(Y) \mid \cF_t) \ee
where the infimum is taken over all $B_v(r)$-simple random walks $Y$ that agree with $X$ in the first $t$ steps (i.e. $\P(Y_0=X_0,\ldots,Y_t=X_t)=1$). The stochastic process $\{L^v_t(r): t \geq 0\}$
is adapted to the filtration $\cF_t$ and is somewhat similar to the Doob martingale. However, here we take expectation with respect to a different process than the random walk itself.

The next few Lemmas show that $L^v_t(r)$ is, in fact, a submartingale with bounded differences and that it does not change its value when the walk is outside of $B_v(r)$.

\begin{lemma} \label{submartingale}
$L^v_t(r)$ is a sub-martingale.
\end{lemma}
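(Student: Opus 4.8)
The plan is to recognize $L^v_t(r)$ as the value function of an optimal control problem and to derive the submartingale property from the associated dynamic-programming identity. Think of an adversary who runs a $B_v(r)$-simple random walk: at each step the adversary may choose the transition distribution freely when the walk sits outside $B_v(r)$, but is forced to move uniformly when inside $B_v(r)$, and wishes to minimize the expected number of visits to $v$ before $\tcov^*(B_v(r))$. Then $L^v_t(r)$ is exactly the optimal expected cost-to-go given the history $X_0,\dots,X_t$. The submartingale inequality will come from two facts: (i) this value function satisfies a one-step Bellman identity, and (ii) the transition law that $X$ itself uses is one of the admissible choices available to the adversary, so it can only do worse than the optimum.

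To make this precise I would write, for a finite path $h=(h_0,\dots,h_t)$ in $G$,
\[
\Lambda_t(h)=\inf_{Y}\E\big(\ell^v(r)(Y)\mid Y_0=h_0,\dots,Y_t=h_t\big),
\]
the infimum being over $B_v(r)$-simple walks $Y$ realizing this history, so that $L^v_t(r)=\Lambda_t(X_0,\dots,X_t)$. Let $\mathcal M(h)$ denote the set of admissible distributions for the next step: the single uniform distribution on the neighbors of $h_t$ if $h_t\in B_v(r)$, and the set of \emph{all} distributions on the neighbors of $h_t$ if $h_t\notin B_v(r)$. The heart of the argument is the Bellman identity
\[
\Lambda_t(h)=\inf_{\mu\in\mathcal M(h)}\ \sum_{w}\mu(w)\,\Lambda_{t+1}(h,w),
\]
valid whenever $\tcov^*(B_v(r))$ has not yet occurred along $h$ (if it has, $\ell^v(r)$ is already determined, $\Lambda_{t+1}(h,w)=\Lambda_t(h)$ for every $w$, and the identity is trivial). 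The inequality $\ge$ follows from the tower property: conditioning any admissible $Y$ on its position at time $t+1$ and using that its continuation from each extended history $(h,w)$ is itself admissible gives $\E(\ell^v(r)(Y)\mid h)=\sum_w \mu_Y(w)\,\E(\ell^v(r)(Y)\mid h,w)\ge \sum_w\mu_Y(w)\,\Lambda_{t+1}(h,w)$, with $\mu_Y\in\mathcal M(h)$ since $Y$ is $B_v(r)$-simple. The reverse inequality $\le$ is obtained by splicing: given $\mu\in\mathcal M(h)$ and, for each neighbor $w$, a $\varepsilon$-optimal continuation from $(h,w)$, one assembles a single $B_v(r)$-simple walk that steps according to $\mu$ at time $t$ and then follows the chosen continuation; this walk is admissible and achieves $\sum_w\mu(w)\Lambda_{t+1}(h,w)+\varepsilon$, so letting $\varepsilon\to 0$ and taking the infimum over $\mu$ closes the gap.

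With the Bellman identity in hand the conclusion is immediate. Because $X$ is a simple random walk it is in particular $B_v(r)$-simple, so the conditional law $\nu_h$ of $X_{t+1}$ given $\cF_t$ (with $h=(X_0,\dots,X_t)$) belongs to $\mathcal M(h)$: it is the uniform distribution when $X_t\in B_v(r)$ and some distribution on the neighbors otherwise. Hence
\[
\E\big(L^v_{t+1}(r)\mid\cF_t\big)=\sum_w\nu_h(w)\,\Lambda_{t+1}(h,w)\ \ge\ \inf_{\mu\in\mathcal M(h)}\sum_w\mu(w)\,\Lambda_{t+1}(h,w)=\Lambda_t(h)=L^v_t(r),
\]
which is exactly the submartingale property; in particular the inequality is an equality when $X_t\in B_v(r)$, where $\mathcal M(h)$ is a singleton. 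Integrability is not an issue, since $0\le L^v_t(r)$ is bounded above by $\E(\ell^v(r)\mid\cF_t)$ evaluated at the honest simple-random-walk continuation, whose finite cover time on the finite graph $G$ makes this bound finite.

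The main obstacle I anticipate is the rigorous justification of the Bellman identity, specifically the splicing step: one must glue the next-step distribution $\mu$ together with an entire family of near-optimal continuation processes into a single well-defined $B_v(r)$-simple random walk, a routine but slightly delicate measurable-selection construction. A secondary point requiring care is the bookkeeping around the absorbing event $\tcov^*(B_v(r))$—once the walk has covered and left $B_v(r)$ the value is frozen—so that both the identity and the integrability bound hold uniformly; handling the possibility that $\ell^v(r)=\infty$ for adversarial continuations (harmless under the infimum) falls under the same heading.
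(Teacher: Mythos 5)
Your proposal is correct and is essentially the paper's own argument in dynamic-programming dress: the key step in both is splicing near-optimal continuations from each extended history into a single admissible $B_v(r)$-simple walk, combined with the observation that the step distribution of $X$ itself is one of the admissible choices. The only cosmetic difference is that you formulate and prove a full Bellman identity (both inequalities), whereas the paper proves only the direction actually needed and remarks, as you effectively do, that the reverse inequality holds but is not required.
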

\begin{proof}
\begin{eqnarray*}
\E(L^v_{t+1}(r) \mid \cF_t) &=& d_{X_t}^{-1} \sum_{u\sim X_t} \E(L^v_{t+1}(r) \mid \cF_t,X_{t+1}=u)\\
 &=& d_{X_t}^{-1} \sum_{u\sim X_t} \inf\, \E(\ell^v(r)(Y) \mid \cF_t, X_{t+1}=u)
\end{eqnarray*}
where for each summand the infimum is taken over all $B_v(r)$-simple random walks which agree with $X$ in the first $t+1$ steps. Given a vector $\{Y^u\}_{u\sim X_t}$ of such random walks we can combine them into a single such random walk $Y$ in the following way: $Y_s=X_s$ for $s\le t+1$ and $Y_s=Y^u_s$ for $s>t+1$ if $X_{t+1} = u$. Obviously, $\E(L^v_{t+1}(r) \mid \cF_t,X_{t+1}=u)$ is the same under $Y^u$ and under $Y$. Hence
\begin{equation}
\label{equal}
 d_{X_t}^{-1} \sum_{u\sim X_t} \inf\, \E(\ell^v(r)(Y) \mid \cF_t, X_{t+1}=u) \geq \inf \, \E(\ell^v(r)(Y) \mid \cF_t)
\end{equation}
where the infimum is now taken over all $B_v(r)$-simple random walks which agree with $X$ in the first $t+1$ steps.
(In fact we have equality in equation (\ref{equal}), but we don't need this.)
In comparison, in the definition of $L^v_t(r)$ we have the same expectation but the infimum is taken over all $B_v(r)$-simple random walks which agree with $X$ in the first $t$ steps. This latter set contains the former, hence
$$\E(L^v_{t+1}(r) \mid \cF_t) \ge L^v_t(r) \, .$$
%
%
%
\end{proof}

\begin{lemma} \label{bounded_range}
If $X_t\not\in B_v(r)$ and $X_{t+1}\not\in B_v(r)$ then $L^v_{t+1}(r)=L^v_t(r)$.
\end{lemma}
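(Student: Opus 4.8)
The plan is to prove the two inequalities $L^v_{t+1}(r) \ge L^v_t(r)$ and $L^v_{t+1}(r) \le L^v_t(r)$ separately, with only the second using the hypothesis that $X_t,X_{t+1}\notin B_v(r)$. It is convenient first to record that, on the event $\{X_0=x_0,\dots,X_t=x_t\}$, the quantity $L^v_t(r)$ equals the deterministic number $\inf_Y \E(\ell^v(r)(Y))$, where the infimum ranges over all $B_v(r)$-simple walks $Y$ whose first $t$ steps are forced to be $x_0,\dots,x_t$; this is immediate from (\ref{submardef}), since any such $Y$ agrees with $X$ on $\cF_t$. Write $\mathcal{W}(x_0,\dots,x_t)$ for this class of walks.

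The inequality $L^v_{t+1}(r)\ge L^v_t(r)$ holds with no assumption on the location of the walk: every walk in $\mathcal{W}(x_0,\dots,x_t,x_{t+1})$ also lies in $\mathcal{W}(x_0,\dots,x_t)$, so the infimum defining $L^v_{t+1}(r)$ is taken over a smaller set and is therefore at least as large.

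For the reverse inequality I would argue that, because $X_{t+1}$ lies outside $B_v(r)$, forcing the extra step costs nothing. Fix a near-optimal walk $Y\in\mathcal{W}(x_0,\dots,x_t)$ and build a walk $Y'\in\mathcal{W}(x_0,\dots,x_t,x_{t+1})$ as follows: let $Y'$ follow $Y$ up to time $t$, then take the two deterministic steps $Y'_{t+1}=x_{t+1}$ and $Y'_{t+2}=x_t$ (both legal, since $x_{t+1}$ is a neighbor of $x_t$ and both lie outside $B_v(r)$, where $B_v(r)$-simplicity imposes no constraint on the transition), and finally let $Y'$ from time $t+2$ onward copy the future of $Y$ from time $t$. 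The resulting $Y'$ is $B_v(r)$-simple and belongs to $\mathcal{W}(x_0,\dots,x_t,x_{t+1})$. The inserted detour $x_t\to x_{t+1}\to x_t$ never visits $v$ (as $v\in B_v(r)$ while $x_t,x_{t+1}\notin B_v(r)$) and adds no vertex of $B_v(r)$ to the covered set, so the covering-and-exit time is merely delayed by two steps and $\ell^v(r)(Y')=\ell^v(r)(Y)$ pointwise. Hence $\E(\ell^v(r)(Y'))=\E(\ell^v(r)(Y))$, and letting $Y$ approach the infimum defining $L^v_t(r)$ gives $L^v_{t+1}(r)\le L^v_t(r)$.

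The step I expect to require the most care is the gluing that defines $Y'$ together with the verification that $\ell^v(r)(Y')=\ell^v(r)(Y)$: one must check that the detour is a genuine $B_v(r)$-simple transition and realize $Y'$ as a single well-defined process, and one must separately dispose of the degenerate case in which $B_v(r)$ has already been covered and exited by time $t$, where $\ell^v(r)$ is already $\cF_t$-measurable and the identity $L^v_{t+1}(r)=L^v_t(r)$ is immediate.
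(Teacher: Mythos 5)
Your proof is correct, and its first half ($L^v_{t+1}(r)\ge L^v_t(r)$ because the class of competing walks shrinks when one more step is forced) is exactly the paper's first step. For the reverse inequality, however, your route genuinely differs from the paper's. The paper never constructs a modified walk: it applies the same monotonicity a second time on the return event $X_{t+2}=X_t$, and then invokes the (stated but unproved) observation that $L^v_t(r)$ depends only on the current position, the set of visited vertices of $B_v(r)$, and $\ell^v_t$ --- none of which change over the excursion $x_t\to x_{t+1}\to x_t$ --- to get the sandwich $L^v_t(r)=L^v_{t+2}(r)\ge L^v_{t+1}(r)\ge L^v_t(r)$. You instead take a near-optimal competitor $Y$ for the time-$t$ infimum and splice the two-step detour $x_t\to x_{t+1}\to x_t$ into it, checking by hand that the spliced walk is still $B_v(r)$-simple, belongs to the more constrained class, and satisfies $\ell^v(r)(Y')=\ell^v(r)(Y)$ pointwise (with the already-covered-and-exited case treated separately). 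Both arguments rest on the same phenomenon --- a two-step excursion outside $B_v(r)$ is invisible to $\ell^v(r)$ --- but yours is the more self-contained one: your splicing-and-reindexing construction is essentially what one would have to write down to justify the paper's state-dependence claim, so you have in effect proved what the paper's shortcut leaves implicit, and your argument makes no use of any property of the underlying walk $X$. The paper's version buys brevity: two invocations of the inclusion argument plus one structural remark, with no bookkeeping about gluing futures.
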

\begin{proof}
Since the infimum in the definition of $L^v_t(r)$ includes all the $B_v(r)$-simple random walks $Y$ where $Y_{t+1}=X_{t+1}$ with probability 1, we see that $L^v_{t+1}(r)\ge L^v_t(r)$. Similarly, if we have $X_{t+2}=X_t$ then $L^v_{t+2}(r)\ge L^v_{t+1}(r)$. However, since $L^v_t(r)$ only depends on $X_t$ and on which vertices were visited in $B_v(r)$ and on $\ell^v_t$ and none of these changes between time $t$ and $t+2$ if $X_{t+2}=X_t$, we get that $L^v_t(r)=L^v_{t+2}(r) \ge L^v_{t+1}(r) \ge L^v_t(r)$.
\end{proof}

In fact, when inside $B_v(r)$, this process is a martingale and when traversing an edge outside of $B_v(r)$ its value doesn't change, so the only times when $L^v_t(r)$ exhibits its ``sub''-ness is when taking a step from the outside to the inside of $B_v(r)$.

\begin{lemma} \label{bounded_differences}
There exists $M=M(D,r)$, such that $|L^v_{t+1}(r)-L^v_t(r)|\le M$.
\end{lemma}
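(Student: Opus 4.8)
The plan is to reduce the statement to a single uniform bound on a ``cost-to-go'' function, and then to establish that bound by exhibiting one explicit admissible strategy and estimating it. Recall that the proof of Lemma~\ref{bounded_range} already showed that $L^v_t(r)$ depends on $\cF_t$ only through the current position $X_t$, the set of vertices of $B_v(r)$ visited up to time $t$, and the count $\ell^v_t$. Writing $U_t\subseteq B_v(r)$ for the set of as-yet-\emph{unvisited} vertices of $B_v(r)$, and letting $h(x,U)$ denote the infimum, over all $B_v(r)$-simple walks started at $x$ with unvisited set $U$, of the expected number of visits to $v$ before the walk has both covered $B_v(r)$ and stepped out of it, the Markov property of this control problem gives
\begin{equation*}
L^v_t(r)=\ell^v_t+h(X_t,U_t).
\end{equation*}
Since $\ell^v_{t+1}-\ell^v_t=\one[X_t=v]\in\{0,1\}$ and $0\le h$, it suffices to find $M_0=M_0(D,r)$ with $h(x,U)\le M_0$ for all $x\in V$ and $U\subseteq B_v(r)$; then $M:=M_0+1$ works, since $|h(X_{t+1},U_{t+1})-h(X_t,U_t)|\le M_0$.

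To bound $h(x,U)$ I would exhibit a single admissible walk and estimate its expected number of visits to $v$. Because $v\in B_v(r)$, the walk can sit at $v$ only while inside $B_v(r)$, so all time spent outside is free. Consider the walk that performs simple random walk whenever it is inside $B_v(r)$, and that is steered whenever it is outside (this is permitted, since $B_v(r)$-simplicity constrains the walk only inside $B_v(r)$): from the initial $x$ it is driven along a path into $B_v(r)$, and after each departure from $B_v(r)$ it simply steps back across the edge just traversed, re-entering at a sphere vertex of $A_v(r)$. This decomposes the trajectory into \emph{excursions}, each a fresh simple random walk inside $B_v(r)$ from a boundary vertex, run until it first leaves $B_v(r)$; excursions are repeated until $B_v(r)$ is covered and the walk is outside.

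Two uniform estimates then control the total. First, following a shortest path from $v$ out of $B_v(r)$ shows that from $v$ the walk leaves $B_v(r)$ before returning to $v$ with probability at least $D^{-(r+1)}$; hence, from any entry vertex, the expected number of visits to $v$ during one excursion is at most $g_1:=D^{r+1}$. Second, for any target $z\in B_v(r)$ and any entry vertex $w$, following a path of length at most $2r$ inside $B_v(r)$ from $w$ to $z$ (for instance through $v$) shows that the excursion visits $z$ with probability at least $p_1:=D^{-2r}$. Writing $N:=|B_v(r)|\le 1+D+\cdots+D^r$, the number $T$ of excursions needed to cover $B_v(r)$ satisfies $T\le\sum_{z\in U}T_z$, where $T_z$ is the number of excursions until $z$ is first visited; by the strong Markov property each $T_z$ is dominated by a geometric variable of success probability $p_1$, so $\E T\le N/p_1$. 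Letting $Z_i$ be the visits to $v$ in the $i$-th excursion, the inequality $\E(Z_i\mid\mathcal G_{i-1})\le g_1$ and the fact that $T$ is a stopping time for the excursion filtration $\mathcal G$ give, by a Wald-type identity, $h(x,U)\le\E\sum_{i\le T}Z_i\le g_1\,\E T\le g_1 N/p_1=:M_0(D,r)$.

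The main obstacle is precisely this last quantitative step: one must verify that the excursion decomposition is legitimate — the strong Markov property makes each excursion a fresh simple random walk from its (boundary) entry vertex, so the per-excursion bounds $g_1$ and $p_1$ hold uniformly in the history — that $T$ is genuinely a stopping time, and that the Wald-type exchange $\E\sum_{i\le T}Z_i=\sum_i\E\!\big(Z_i\,\one_{\{i\le T\}}\big)\le g_1\,\E T$ is valid even though the $Z_i$ are unbounded (this holds since $\{i\le T\}\in\mathcal G_{i-1}$). The remaining ingredients are routine: the decomposition $L^v_t(r)=\ell^v_t+h(X_t,U_t)$ is just the state-dependence already recorded in Lemma~\ref{bounded_range}, and the escape- and hitting-probability estimates are elementary. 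Throughout we use that $G$ is connected and $B_v(r)\ne V$, so that every excursion terminates almost surely and steering back into $B_v(r)$ is always possible.
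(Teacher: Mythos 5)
Your proof is correct, and its first move is exactly the paper's: both write the increment of $L^v_t(r)$ as the increment of $\ell^v_t$ plus the increment of the nonnegative ``cost-to-go'' $L^v_t(r)-\ell^v_t$, so that it suffices to bound the cost-to-go uniformly by some $M_0(D,r)$ and take $M=M_0+1$, and both bound the infimum by the cost of a single explicit admissible continuation. The difference is which continuation, and how its cost is estimated. The paper follows the simple random walk itself and argues that after every visit to $v$ there is probability at least $D^{-2D^{r+1}}$ that the walk next performs a depth-first search of $B_v(r)$ (visiting $v$ at most $2D^{r+1}$ times along the way), which yields $M_0=D^{2D^{r+1}}+2D^{r+1}$ in two sentences. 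You instead steer the walk outside so that it re-enters $B_v(r)$ immediately after every exit, chop the trajectory into excursions from $A_v(r)$, and combine the escape estimate $g_1=D^{r+1}$, the hitting estimate $p_1=D^{-2r}$, a union bound over targets for the expected number of excursions needed to cover, and a Wald-type identity (your check that $\{i\le T\}\in\mathcal G_{i-1}$ is the right justification for the unbounded $Z_i$). Your route is longer but buys two things. First, a singly exponential constant, $M_0\le D^{4r+2}$, versus the paper's doubly exponential one. Second, robustness: the paper's phrase ``take the walk $X$ itself'' is imprecise when $X$ is merely $B_v(r)$-simple (as when this lemma is invoked inside Lemma \ref{weak_concentration}), because such an $X$ can, with positive probability, leave $B_v(r)$ and never return, in which case $\ell^v(r)=\infty$ by definition and that particular continuation has infinite expected cost; the repair is to use a continuation guaranteed to cover (e.g.\ one that is simple everywhere, or your steered one), which is precisely what your construction provides. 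Since the lemma asserts only the existence of some $M(D,r)$, both arguments establish it.
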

\begin{proof}
Consider $L^v_t(r)-\ell^v_t$. This is the infimum of the expected number of visits to $v$ between times $t$ and $\tcov^*(B_v(r))$ where the infimum is with respect to any $B_v(r)$-simple random walk that agrees with $X$ in the first $t$ steps. This number is nonnegative and  bounded above by the expectation when we take the walk $X$ itself. This  is at most $D^{2D^{r+1}} + 2D^{r+1}$, since after every visit to $v$ there is a probability of at least $D^{-2D^{r+1}}$ that $X$ now performs a depth first search of $B_v(r)$, and during such a search the walk may visit $v$ no more than $2D^{r+1}$ times. Since $|\ell^v_{t+1}-\ell^v_t|\le 1$ we get that
$$|L^v_{t+1}(r)-L^v_t(r)| \le D^{2D^{r+1}} + 2D^{r+1} +1 \, .$$
\end{proof}

Now we can turn to the proof of the main result.

\begin{proof}[Proof of Theorem \ref{mainthm}]
Given $D$ and $C$, let $r=r(D,4C)$, as given by Lemma \ref{mainlem}. If $G=(V,E)$ is a connected graph with maximal degree at most $D$, and $n=|V|>D^{r+1}$ then for every $v\in V$ we have $A_v(r)\ne \emptyset$. Hence, we can define $L^v_t(r)$ and we have $L^v_0(r)\ge 4C$ for all $v\in V \setminus B_{X_0}(r)$.

Consider the sum
$$L_t=\sum_{v\in G \setminus B_{X_0}(r)} L^v_t(r) \, .$$

By Lemma \ref{submartingale} we know that $L_t$ is a sub-martingale too, since all of the $L^v_t(r)$ are adapted to the same filtration. Combining Lemmas \ref{bounded_differences} and \ref{bounded_range} shows that $|L_{t+1}-L_t|\le M$, provided we incorporate a factor $D^{r+2}$
into the constant $M=M(D,r)$ from Lemma \ref{bounded_differences}.
We now have
$$L_0 \ge 4C \big|V \setminus B_{X_0}(r)\big|\ge 3Cn \, ,$$
for sufficiently large $n$.

We can now apply the Hoeffding-Azuma inequality to get
$$\P(L_{t} \le 2Cn) \le e^{-n^2/2tM}$$
for any $t$.

Substituting $t=2Cn$ we get
\begin{equation}
\label{star}
\P(L_{2Cn} \le 2Cn) \le e^{-n/4CM} \, .
\end{equation}

Let $\tcov^*$ be the first time $t>\tcov$ such that $X_t \not \in B_{X_{\tcov}}(2r)$.
Note that
$\tcov^* \ge \tcov^*(B_v(r))$ for all $v\in V$.
Note also  that if $t\ge \tcov^*(B_v(r))$ then $\ell^v_t \ge \ell^v_{\tcov^*(B_v(r))} =
L^v_t$ for all $v \in V$, and summing this inequality over $v$ gives
\begin{eqnarray*}
t &=& \sum_{v \in V} \ell^v_t \\
&\ge& \sum_{v  \in V} L^v_t = L_t\, .
\end{eqnarray*}
Thus if $L_t > t$ then we must have $\tcov^*(B_v(r)) > t$ for some $v\in V$ and hence $\tcov^* > t$ as well. Thus
$\P(\tcov^* \leq t)  \leq \P(L_t \leq t)$. Substituting $t = 2Cn$ gives
\begin{eqnarray*}
\P( \tcov^* \leq 2Cn) &\leq& \P( L_{2Cn}  \leq 2Cn) \\
&\leq& e^{-n/4CM},
\end{eqnarray*}
by equation (\ref{star}).
Finally we note that $\P(\tcov^* - \tcov \ge t)$ decays exponentially fast, at a rate depending only on
 $D$ and $r$, regardless of the history until time $\tcov$. Hence,
$$\P(\tcov<Cn) \le \P(\tcov^* \leq  2Cn) + \P(\tcov^* - \tcov \ge Cn) \le e^{-\alpha n} \, ,$$
for a constant $\alpha$ that depends only on $D$ and $r$ which in turn depends only on $D$ and $C$.
\end{proof}

%
%
%
%


\section{Proof of the main Lemma}

%

Define
$$\phi(r)=\min \E(\ell^v(r)) / d_v$$
where the minimum is take over all connected graphs $G=(V,E)$ of maximal degree at most $D$ and vertices $v \in V$ such that $A_v(r) \ne \emptyset$ and over all $B_v(R)$-simple random walks started outside of $B_v(r)$. Then one may restate the main Lemma as
$$\lim_{r\ra \infty} \phi(r) = \infty \, .$$
We will prove this fact by induction on the value of $\phi(r)$. More precisely, we will show that if $\phi(r)=K$ then there is some $R>r$ such that $\phi(R)\ge K+ e^{-3 K d_v -4}$. Obviously, this is enough, as iterations of $K \mapsto K+e^{-3 K d_v -4}$ tend to infinity.

For a set of vertices $S\subset V$ write $B_S(r)=\cup_{v\in S} B_v(r)$. The following is a weaker, but more general version of Theorem \ref{mainthm}, showing that the weighted sum of the number of visits to a set $S$ of vertices is unlikely to be small for $B_S(r)$-simple random walk.

\begin{lemma} \label{weak_concentration}
Fix $r \in \N$ and let $K=\phi(r)$. For any $\eps>0$ there is some $a=a(r,\eps)>0$ such that if $G=(V,E)$ is a connected graph of maximal degree at most $D$ and $\{a_v\}_{v\in V}$ a probability distribution on some $S\subset V$ with $\max_{v \in S} a_v \le a$ then for any $B_S(r)$-simple random walk started outside $B_S(r)$ we have
$$\P\big(\sum_{v\in S} a_v \ell^v(r) < K \sum_{v\in S} a_v d_v -\eps\big) < \eps \, .$$
\end{lemma}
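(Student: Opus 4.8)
The plan is to upgrade the one-vertex bound $\E(\ell^v(r)) \ge K d_v$ into concentration of the weighted sum $Z := \sum_{v \in S} a_v \ell^v(r)$ by reusing the localized submartingale from Section~\ref{mainthm}'s construction. First I would observe that since $B_v(r) \subseteq B_S(r)$ for each $v \in S$, any $B_S(r)$-simple walk started outside $B_S(r)$ is in particular a $B_v(r)$-simple walk started outside $B_v(r)$; hence the very definition of $\phi(r)=K$ (as a minimum over all such configurations) gives $L^v_0(r) \ge K d_v$ for every $v \in S$. Setting $L_t := \sum_{v \in S} a_v L^v_t(r)$ and $\mu := K \sum_{v} a_v d_v$, Lemma~\ref{submartingale} shows $L_t$ is a submartingale (a nonnegative-weighted sum of submartingales on the common filtration $\cF_t$) with $L_0 \ge \mu$; and since $L^v_t(r)=\ell^v(r)$ once $B_v(r)$ has been covered and exited, we have $L_\tau = Z$ at the time $\tau := \max_{v\in S}\tcov^*(B_v(r))$.

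The naive route — bounding the variance of the Doob martingale of $Z$ itself — fails, because revealing a step of the walk far from $v$ still shifts $\E(\ell^v(r)\mid\cF_t)$, so its increments are neither local nor small and the far-apart covariances $\cov(\ell^u(r),\ell^v(r))$ need not be negligible. The reason to work with $L_t$ is precisely Lemma~\ref{bounded_range}: $L^v_{t+1}(r)-L^v_t(r)$ vanishes unless the walk lies within distance $r+1$ of $v$ at time $t$. I would make this quantitative via the Doob decomposition $L_t = N_t + A_t$ into a martingale $N_t$ with $N_0 = L_0 \ge \mu$ and an increasing predictable part $A_t$ with $A_0=0$; decomposing each $L^v_t(r)=N^v_t+A^v_t$ and summing, $\Delta N_t = \sum_v a_v \Delta N^v_t$ is supported on the $\le D^{r+2}$ vertices $v \in B_{X_t}(r+1)$. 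Since $A_t$ is increasing we get $L_\tau \ge N_\tau$, so it suffices to bound $\P(N_\tau < \mu-\eps)$, which by Chebyshev reduces to showing $\var(N_\tau) < \eps^3$.

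This variance bound is the main step, and it is where the small-weight hypothesis enters. At each step the $a$-weight of the active vertices is $\sum_{v\in B_{X_t}(r+1)} a_v \le a\,D^{r+2}$, so Cauchy--Schwarz gives
\[
(\Delta N_t)^2 \le \Big(\sum_{v\in B_{X_t}(r+1)} a_v\Big)\sum_{v} a_v (\Delta N^v_t)^2 \le a\, D^{r+2}\sum_v a_v (\Delta N^v_t)^2 .
\]
Summing over $t$ and using orthogonality of martingale increments,
\[
\var(N_\tau) \le a\, D^{r+2}\sum_v a_v\, \E\langle N^v\rangle_\infty \le a\, D^{r+2}\,C_1(D,r),
\]
where $\E\langle N^v\rangle_\infty \le C_1(D,r)$ is a uniform bound: each increment obeys $|\Delta N^v_t|\le 2M$ with $M=M(D,r)$ from Lemma~\ref{bounded_differences}, and it is nonzero only during the steps the walk spends in $B_v(r+1)$ before covering $B_v(r)$, a count with a geometric tail (by the depth-first-search argument of Lemma~\ref{bounded_differences}) and hence expectation depending only on $D$ and $r$. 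The essential gain is that Cauchy--Schwarz trades the troublesome cross-covariances for a single factor equal to the weight of the active set, namely $a\,D^{r+2}$. Choosing $a=a(r,\eps)$ so small that $a\,D^{r+2}C_1(D,r)<\eps^3$ then yields $\P(Z<\mu-\eps)\le\P(N_\tau<\mu-\eps)\le \var(N_\tau)/\eps^2<\eps$, as required.

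The remaining issue is purely technical: the walk need not cover every $B_v(r)$, so $\tau$ may be infinite and some $\ell^v(r)$ infinite. This is harmless for the stated one-sided bound, since on such events $Z$ only grows; rigorously I would run the argument for the stopped martingale $N_{t\wedge\tau}$, note that the variance estimate above is uniform in the truncation so that $N_{t\wedge\tau}$ is $L^2$-bounded, and pass to the limit via Fatou's lemma applied to the events $\{N_{t\wedge\tau}<\mu-\eps\}$, using that each $L^v_t(r)$ stays finite with bounded increments throughout by Lemma~\ref{bounded_differences}. I expect the genuinely delicate point to be the uniform-in-graph control of $\E\langle N^v\rangle_\infty$, i.e. that the per-vertex quadratic variation is bounded by a constant depending only on $D$ and $r$ regardless of the global geometry.
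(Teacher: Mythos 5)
Your overall architecture is essentially the paper's own: the paper also passes from the submartingales $L^v_t(r)$ to their martingale parts $M^v_t$ (with $M^v_0=L^v_0(r)\ge K d_v$, $M^v_t\le L^v_t(r)$, and increments vanishing whenever those of $L^v_t(r)$ do), uses Lemma~\ref{bounded_range} to get disjointly supported increments --- hence zero covariance between $M^u$ and $M^v$ when $d(u,v)>2r$ --- bounds each $\var(M^v)$ by a constant $N(D,r)$, deduces $\var\bigl(\sum_v a_v M^v\bigr)\le a D^{r+1}N$, and finishes with Chebyshev. Your Cauchy--Schwarz over the active set $B_{X_t}(r+1)$, combined with orthogonality of martingale increments in time, is a repackaging of exactly that covariance estimate, so the route is the same, not a new one.

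The one step that fails as written is precisely the one you flagged as delicate: the claim that the number of steps the walk spends in $B_v(r+1)$ before $\tcov^*(B_v(r))$ has a geometric tail with parameters depending only on $D$ and $r$. The walk is only guaranteed to be simple on $B_S(r)$, and in general the annulus $A_v(r+1)$ is \emph{not} contained in $B_S(r)$; there the walk may be adversarial. For example, it can bounce forever between a vertex $w\in A_v(r+1)\setminus B_S(r)$ and a neighbor of $w$ at distance $r+2$ from $v$, so that this count is infinite with probability one, and the depth-first-search argument (which needs simplicity) gives nothing. The desired bound $\E\langle N^v\rangle_\infty\le C_1(D,r)$ is nevertheless true, but requires a different accounting: one has $\E\bigl((\Delta N^v_t)^2\mid\cF_t\bigr)=\var\bigl(\Delta L^v_t\mid\cF_t\bigr)\le M^2 q_t$, where $q_t$ is the conditional probability of stepping into $B_v(r)$ at time $t+1$ (with $q_t$ replaced by $1$ when $X_t\in B_v(r)$). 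The sum of $q_t$ over annulus times is the compensator of the number of entries into $B_v(r)$, and both the expected number of entries into $B_v(r)$ and the expected number of steps spent inside $B_v(r)$ before $\tcov^*(B_v(r))$ \emph{are} bounded by a constant $C(D,r)$, by the depth-first-search argument applied inside $B_v(r)$, where the walk is genuinely simple. To be fair, the paper's own justification of the corresponding bound --- that $\var(M^v)$ is at most the second moment of $\ell^v(r)$, which has exponential tails --- is also not literally correct, since by definition $\ell^v(r)=\infty$ on the event that an adversarial walk never covers $B_v(r)$; the repair is the same compensator argument, so your proof and the paper's stand or fall together on this point, and both are fixable.
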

\begin{proof}
For any submartingale $L_t$ one can construct a martingale $M_t$ such that
\begin{enumerate}
\item $M_0=L_0$

\item $M_t\le L_t$

\item If $L_{t+1}=L_t$ then $M_{t+1}=M_t$

\item If the differences of $L_t$ are bounded by $L$ then the differences of $M_t$ are bounded by $2L$
\end{enumerate}

Now, apply this to $L^v_t(r)$ to get the martingales $M^v_t$, and let
$$M^v=\lim_{t\ra \infty} M^v_t \le \lim_{t\ra \infty} L^v_t(r) = \ell^v(r) \, .$$

It now follows that $M^v$ and $M^u$ are uncorrelated when the distance between $v$ and $u$ is more than $2r$. This is since $M^v=\sum_{t=0}^\infty M^v_{t+1}(r) - M^v_t(r)$ and we have $(M^v_{t+1}-M^v_t)(M^u_{t+1}-M^u_t)=0$ by Lemma \ref{bounded_range} and property 3 above and for $s\ne t$ we have $\E((M^v_{t+1}-M^v_t)(M^u_{s+1}-M^u_s))=0$ because these are martingales. Also, the variance of each $M^v$ is bounded by the second moment of $\ell^v(r)$ which is bounded by some function of $D$ and $r$ only, since $\ell^v(r)$ has exponential tails with parameter depending only on $D$ and $R$
(see Lemma \ref{bounded_differences}). Let $N=N(D,r)$ be such a bound for $\var(M^v)$.

Now let
$$M=\sum_{v\in S} a_v M^v \le \sum_{v\in S} a_v \ell^v(r) \, .$$

We bound $\var(M)$ by
\begin{eqnarray*}
\var(M) &=& \sum_{v\in S} \sum_{u\in S} a_v a_u \cov(M^v,M^u)\\
&\le& \sum_{v\in S} \sum_{u in B_v(r)} a_v a_u \cov(M^v,M^u)\\
&\le& \sum_{v\in S} a_v \sum_{u in B_v(r)} a_u \sqrt{\var(M^v)\var(M^u)}\\
&\le& \sum_{v\in S} a_v a D^{r+1} N\\
&\le& a D^{r+1} N \, .
\end{eqnarray*}
and the Lemma holds by choosing $a$ small enough and applying Chebyshev's inequality
\end{proof}

Let $\tau_v(r)$ be the positive hitting time of $A_v(r)$ and for $w\in B_v(r)$ let $a^v_w(r)=\E_w(\ell^v_{\tau_v(r)})$, where the expectation is with respect to a simple random walk. Obviously, this expectation is the same for any $B_v(r)$-simple random walk.

\begin{lemma} \label{reversibility}
Let $G=(V,E)$ be a finite graph, $v\in V$ a vertex and $r\in \N$ such that $A_v(r)\ne \emptyset$. Then
$$\sum_{w\in A_v(r)} d_w a^v_w(r) = d_v \, .$$
\end{lemma}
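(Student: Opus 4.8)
The plan is to reduce this identity to a discrete Green's-function computation, exploiting that $d_u$ is the (reversible) stationary measure of simple random walk. Write $A=A_v(r)$ and let $\sigma$ denote the \emph{first} hitting time of $A$ (allowing $\sigma=0$), as opposed to the positive hitting time $\tau_v(r)$. The central auxiliary object would be $h(u)=\E_u(\ell^v_{\sigma})$, the expected number of visits to $v$ strictly before the walk reaches $A$, started from $u$. By definition $h\equiv 0$ on $A$, while a one-step analysis shows that for $u\notin A$,
\[ d_u\, h(u) = d_u\,\one(u=v) + \sum_{z\sim u} h(z), \]
since from $u\notin A$ the walk has not yet hit $A$ at time $0$ and then moves to a uniform neighbour; here I use $r\ge 1$, so that $v\notin A$.

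Next I would relate the quantity of interest to $h$. For $w\in A$ the positive hitting time $\tau_v(r)$ forces at least one step out of $w$ before $A$ is met again, and $w\ne v$, so conditioning on the first step gives $d_w\, a^v_w(r)=\sum_{z\sim w} h(z)$, the terms with $z\in A$ vanishing because $h\equiv 0$ there. Summing over $w\in A$ and interchanging the order of summation,
\[ \sum_{w\in A} d_w\, a^v_w(r) = \sum_{w\in A}\sum_{z\sim w} h(z) = \sum_{z\notin A} b(z)\, h(z), \]
where $b(z)=|\{w\in A: w\sim z\}|$ counts the neighbours of $z$ lying in $A$ and, again, only $z\notin A$ survive.

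It then remains to show $\sum_{z\notin A} b(z)\,h(z)=d_v$, and I would obtain this by summing the occupation recursion against the stationary weights. Summing $d_u h(u)=d_u\one(u=v)+\sum_{z\sim u}h(z)$ over $u\notin A$ and reindexing the double sum by the inner vertex $z$ (using $h\equiv 0$ on $A$) yields
\[ \sum_{u\notin A} d_u h(u) = d_v + \sum_{z\notin A}\big(d_z-b(z)\big)h(z). \]
Since $\sum_{u\notin A}d_u h(u)=\sum_{z\notin A}d_z h(z)$, the common term $\sum_{z\notin A}d_z h(z)$ cancels, leaving $\sum_{z\notin A}b(z)h(z)=d_v$, as required. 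The only delicate points are the bookkeeping of the edges across the boundary of $A$ — each such edge is counted exactly once in each of the two double sums, which is precisely what makes the cancellation exact — and the distinction between the positive hitting time appearing in $a^v_w(r)$ and the first hitting time defining $h$; both are settled by the first-step analyses above. Conceptually this is just the reversibility statement that $d$ is stationary: averaging the excursion counts between consecutive visits to $A$ against the entrance measure proportional to $d_w\one(w\in A)$ recovers the stationary weight $d_v$, which is the ergodic-theoretic reason the identity must take this form, and one could alternatively derive it as the ratio of long-run visit frequencies to $v$ and to $A$.
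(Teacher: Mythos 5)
Your proof is correct, but it takes a genuinely different route from the paper's. The paper argues by path reversal: $a^v_w(r)$ is the sum of probabilities of paths from $w$ to $v$ whose interior avoids $A_v(r)$, and detailed balance makes each such path exactly $d_v/d_w$ times as likely as its reversal; this yields the \emph{pointwise} identity $d_w a^v_w(r) = d_v \P_v(X_{\tau_v(r)}=w)$, and the lemma follows by summing, since the first-hitting distribution on $A_v(r)$ sums to one. Your argument instead runs a first-step analysis through the Green's-type function $h$ and a summation by parts; the key reindexing $\sum_{u\notin A}\sum_{z\sim u} h(z)=\sum_{z\notin A}\big(d_z-b(z)\big)h(z)$ uses only that the degree measure is \emph{stationary}, not reversibility proper, so your proof generalizes verbatim to any Markov chain with stationary measure $\pi$, giving $\sum_{w\in A}\pi_w a^v_w = \pi_v$ (a form of Kac's excursion identity), which the path-reversal argument does not. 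What you lose is the pointwise refinement: $d_w a^v_w(r) = d_v\P_v(X_{\tau_v(r)}=w)$ is strictly stronger than the summed identity (it genuinely requires reversibility and fails for general stationary chains), and the paper reuses exactly this pointwise form in the proof of Lemma \ref{small_expectations}, so your version would not suffice as a drop-in replacement there. Two small points to patch in your write-up: first, cancelling $\sum_{z\notin A} d_z h(z)$ from both sides is only legitimate once you know this sum is finite; this holds because $h\equiv 0$ outside $B_v(r-1)$ (any path from outside the ball to $v$ must cross $A_v(r)$) and off the component of $v$, while on the finite component of $v$ the expected hitting time of $A$, hence $h$, is finite. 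Second, as you note, you need $r\ge 1$ so that $v\notin A$; this matches the paper's conventions and is harmless.
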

\begin{proof}
$a^v_w(r)$ is equal to the sum of the probabilities of all paths which start at $w$ and end at $v$ and do not return to $A_v(r)$. For each of these paths, the probability that a simple random walk would traverse it is exactly $d_v/d_w$ times the probability of traversing it in the reverse direction. Hence,
$$\sum_{w\in A_v(r)} d_w a^v_w(r) = d_v \sum_{w\in A_v(r)} \P_v (X_{\tau_v(r)}=w) = d_v$$
where the last equality follows since the walk hits exactly one vertex of $A_v(r)$.
\end{proof}



Let $m^v(r)=\max_{w \in A_v(r)} a^v_w(r)$.

\begin{lemma} \label{small_expectations}
Given a graph $G=(V,E)$ with maximal degree at most $D$ and a vertex $v\in V$ and $r \in \N$ such that $B_v(r) \ne V$, there is $r' \le r$ such that
$$m^v(r') \le \sqrt{\frac{d_v a^v_v(r+1)}{r}}$$
for any $B_v(r)$-simple random walk.
\end{lemma}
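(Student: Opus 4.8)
The plan is to reduce the statement to a single summed inequality across the annuli and then establish that inequality by a telescoping argument. Concretely, I will show that
\[
\sum_{r'=1}^{r} m^v(r')^2 \;\le\; d_v\, a^v_v(r+1).
\]
Granting this, the Lemma follows immediately by pigeonhole: the minimum of the $r$ numbers $m^v(r')^2$ is at most their average, so some $r'\le r$ satisfies $m^v(r')^2 \le d_v a^v_v(r+1)/r$, which is exactly the claimed bound. (Every quantity $a^v_w(r')$ with $r'\le r+1$ depends only on the walk's behaviour inside $B_v(r)$ until it reaches the relevant annulus, hence is the same for every $B_v(r)$-simple walk; and the hypothesis $B_v(r)\ne V$ guarantees $A_v(r')\ne\emptyset$ for all $r'\le r+1$, since on a connected graph the presence of a vertex at distance $>r$ forces all intermediate BFS layers to be nonempty.)

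The heart of the argument is the per-level increment bound
\[
a^v_v(r'+1) - a^v_v(r') \;\ge\; \frac{m^v(r')^2}{d_v}, \qquad 1\le r'\le r,
\]
which telescopes to the displayed inequality because $a^v_v(1)\ge 0$. To prove it, I first decompose using the strong Markov property at $\tau_v(r')$, the first hitting time of $A_v(r')$, which started from $v$ precedes $\tau_v(r'+1)$: writing $\mu_w = \P_v(X_{\tau_v(r')}=w)$ and $h_w = \E_w(\ell^v_{\tau_v(r'+1)})$ for $w\in A_v(r')$, this gives $a^v_v(r'+1)-a^v_v(r') = \sum_{w\in A_v(r')} \mu_w h_w$. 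Next I use the pathwise domination $h_w \ge a^v_w(r')$: any visit to $v$ made before the walk returns to $A_v(r')$ is automatically made before the walk reaches the farther annulus $A_v(r'+1)$, because travelling from the interior out to $A_v(r'+1)$ and back to $v$ forces a prior crossing of $A_v(r')$. Finally, the reversibility computation already carried out in the proof of Lemma~\ref{reversibility} yields precisely $d_w a^v_w(r') = d_v\,\mu_w$, i.e. $\mu_w = d_w a^v_w(r')/d_v$. Substituting,
\[
a^v_v(r'+1)-a^v_v(r') \;\ge\; \sum_{w\in A_v(r')} \mu_w\, a^v_w(r') \;=\; \frac{1}{d_v}\sum_{w\in A_v(r')} d_w\, a^v_w(r')^2 \;\ge\; \frac{m^v(r')^2}{d_v},
\]
where the last step retains only the maximising vertex $w^\ast$ and uses $d_{w^\ast}\ge 1$.

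The substantive idea, and the step I would highlight, is recognising that the strong-Markov weights $\mu_w$ coincide (up to the factor $d_v/d_w$) with the reversibility weights $d_w a^v_w(r')$; this matching is what turns the average $\sum_w \mu_w a^v_w(r')$ into the weighted sum of squares $\tfrac1{d_v}\sum_w d_w a^v_w(r')^2$, and it is precisely this square that produces the $m^v(r')^2$ and hence the square root in the conclusion. The one place I expect to need genuine care is the pathwise domination $h_w\ge a^v_w(r')$: it should be justified through the geometric fact that reaching $v$ from $A_v(r')$ without returning to $A_v(r')$ precludes having touched $A_v(r'+1)$, rather than by a naive ``longer horizon'' claim, which fails on paths that step outward first. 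Once those two points are in hand, the reduction and the telescoping are routine.
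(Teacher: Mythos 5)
Your proof is correct and follows essentially the same route as the paper: both reduce the Lemma, via the reversibility identity $d_w a^v_w(r') = d_v \P_v(X_{\tau_v(r')}=w)$, to the inequality $\sum_{r'\le r} (m^v(r'))^2 \le d_v\, a^v_v(r+1)$ and then conclude by pigeonhole. The paper justifies that inequality by noting that the path families (first hit $A_v(r')$ at $w$, then revisit $v$ before re-hitting $A_v(r')$) are disjoint over distinct $r'$ and $w$ and all contribute to $a^v_v(r+1)$ --- which is exactly the annulus-crossing fact underlying your telescoping and your pathwise domination $h_w \ge a^v_w(r')$, so the two write-ups differ only in bookkeeping.
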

\begin{proof}
As in Lemma \ref{reversibility} we have
$$d_w a^v_w(r') = d_v \P_v (X_{\tau_v(r')}=w)$$
for every $w \in A_v(r')$ when $r' \le r$.

One may bound $a^v_v(r)$ by considering, for every $r' \le r$ all the paths which start at $v$, hit $A_v(r')$ at some specified vertex $w$ and then hit $v$ again before returning to $A_v(r')$. For distinct $r'$'s or distinct $w$ in the same $A_v(r')$ these are disjoint sets. This summation yields
\begin{eqnarray*}
a^v_v(r+1) &\ge& \sum_{r' \le r} \sum_{w \in A_v(r')} \P_v (X_{\tau_v(r')}=w) a^v_w(r)\\
&=&\sum_{r' \le r} \sum_{w \in A_v(r')} (a^v_w(r'))^2 d_w /d_v\\
&\ge& \sum_{r' \le r} (m^v(r'))^2 /d_v = r (m^v(r))^2 / d_v
\end{eqnarray*}
where the middle equality follows by reversibility.
\end{proof}

We will also need the following useful Lemma.

\begin{lemma} \label{bit_process}
Let $x_i$ be a stochastic process on $\{0,1\}$, adapted to the filtration $\cF_i$ and let $p_i=\E(x_i \mid \cF_{i-1})$. If $p_i \le \frac12$ a.s. for all $i$, and $\tau$ is a stopping time such that $\sum_{i=1}^\tau p_i \le K$ a.s. then
$$\P(\forall_{i\le \tau} x_i=0) \ge e^{-3 K} \, .$$
\end{lemma}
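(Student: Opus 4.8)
The plan is to build a nonnegative submartingale that compensates, at each step, for the multiplicative loss incurred by insisting $x_i=0$. Concretely, I would set
$$Z_t = \Big(\prod_{i=1}^t (1-x_i)\Big)\exp\Big(3\sum_{i=1}^t p_i\Big), \qquad Z_0=1.$$
Since $x_i\in\{0,1\}$, the product equals $1$ if $x_1=\cdots=x_t=0$ and equals $0$ otherwise, so $Z_t\ge 0$ always, and the exponential factor is engineered to offset the expected decay of the product. The constant $3$ is exactly where the hypothesis $p_i\le\frac12$ will be used.

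First I would check that $Z_t$ is a submartingale. As $p_t=\E(x_t\mid\cF_{t-1})$ is $\cF_{t-1}$-measurable and $Z_{t-1}\ge 0$, we have
$$\E(Z_t\mid \cF_{t-1}) = Z_{t-1}\,e^{3p_t}\,\E(1-x_t\mid\cF_{t-1}) = Z_{t-1}\,e^{3p_t}(1-p_t).$$
It therefore suffices to prove the elementary inequality $e^{3p}(1-p)\ge 1$ for $0\le p\le\frac12$. Writing $g(p)=3p+\ln(1-p)$, one has $g(0)=0$ and $g'(p)=3-\tfrac{1}{1-p}\ge 1>0$ on $[0,\tfrac12]$, so $g\ge 0$ there and the inequality follows. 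Hence $\E(Z_t\mid\cF_{t-1})\ge Z_{t-1}$.

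Next I would apply optional stopping: for each $n$ the stopped process satisfies $\E(Z_{\tau\wedge n})\ge \E(Z_0)=1$. The key point that makes the passage to the limit painless is a uniform bound. Since $\sum_{i=1}^{\tau\wedge n}p_i\le\sum_{i=1}^{\tau}p_i\le K$ and the product lies in $\{0,1\}$, we get $0\le Z_{\tau\wedge n}\le e^{3K}$ for every $n$. The sequence $Z_{\tau\wedge n}$ converges pointwise (the product is monotone nonincreasing and the exponent monotone nondecreasing and bounded), so dominated convergence yields $\E(Z_\tau)\ge 1$, where on $\{\tau=\infty\}$ the quantity $Z_\tau$ denotes this pointwise limit.

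Finally, let $A=\{x_i=0 \text{ for all } i\le\tau\}$. On $A^c$ the product vanishes, so $Z_\tau=0$ there, while on $A$ we have $Z_\tau=\exp\big(3\sum_{i\le\tau}p_i\big)\le e^{3K}$. Thus $Z_\tau\le e^{3K}\one_A$, giving $1\le\E(Z_\tau)\le e^{3K}\P(A)$ and hence $\P(A)\ge e^{-3K}$, which is the claim. I expect the only delicate point to be the optional-stopping limit when $\tau$ may be unbounded or even infinite; but the uniform bound $Z_{\tau\wedge n}\le e^{3K}$ reduces this to dominated convergence, and the rest rests solely on the scalar inequality $e^{3p}(1-p)\ge1$ verified above.
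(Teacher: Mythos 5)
Your proof is correct and is essentially the paper's argument: the paper works with the exact martingale $M_i=\prod_{j\le i}(1-p_j)^{-1}$ on the event $\{\forall_{j\le i}\, x_j=0\}$ (and $0$ otherwise) and only afterwards bounds $M_\tau\le e^{3K}$ via the scalar inequality $1-p\ge e^{-3p}$ on $[0,\tfrac12]$, whereas you fold that same inequality into the process itself, turning the exact compensator $(1-p_j)^{-1}$ into $e^{3p_j}$ and hence the martingale into a submartingale — after which optional stopping finishes both arguments identically. If anything, your treatment of the passage to the limit when $\tau$ is unbounded (the uniform bound $Z_{\tau\wedge n}\le e^{3K}$ plus dominated convergence) is more careful than the paper's one-line appeal to the optional stopping theorem.
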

\begin{proof}
Define $M_i=\prod_{j\le i} (1-p_j)^{-1}$ if $\forall_{j\le i} x_i=0$ and $M_i=0$ otherwise. It is easily checked that $M_i$ is a martingale adapted to $\cF_i$ and $M_0=1$. Since $p_j \le \frac12$ for all $j$ we have $1-p_j \ge e^{-3 p_j}$ so $\prod_{j\le i} (1-p_j)^{-1} \le e^{3 \sum_{j\le i} p_j}$. Since $\sum_{i=1}^\tau p_i \le K$, by the optional stopping Theorem we have $\P(\forall_{i\le \tau} x_i=0) \ge e^{-3 K}$.
\end{proof}


Now we are ready to prove the main Lemma. Very roughly, we show that for some radius $R'$, by the time we cover $A_v(R')$, we visit $v$ almost $K d_v$ times in expectation and there is a non-negligible probability that we haven't visited $v$ at all, in which case we will visit $v$ at least once before covering, thus increasing the expected number of visits to $v$ before covering by this probability.

\begin{proof}[Proof of Lemma \ref{mainlem}]
Let $r$ be such that $K=\phi(r)$. Fix some $\eps$ to be chosen later and let $a=a(r,\eps)$ from Lemma \ref{weak_concentration}. Let $R= D (K+e^{-3 K d_v -4}) /a^2$. We claim that $\phi(R+r)\ge K+e^{-3 K d_v -4}$. This is enough to show that $\lim_{r\ra\infty}\phi(r)=\infty$.

Let $G=(V,E)$ be a graph with maximal degree at most $D$ and let $v\in V$ a vertex such that $A_v(R)\ne \emptyset$. We want to show that for any $B_v(R)$-simple random walk started outside $B_v(R)$ we have $\E(\ell^v(R))\ge K+e^{-3 K d_v -4}$. If $a^v_v(R) \ge K+e^{-3 K d_v -4}$ then we are done (recall that $a^v_w(R)$ is the expected number of visits to $v$ before hitting $A_v(R)$ for a simple random walk started at $w$). Hence, from now on we assume that
\be \label{bounded_resistance} a^v_v(R) \ge K+e^{-3 K d_v -4} \, .\ee

In this case, by Lemma \ref{small_expectations} there is $R' \le R$ such that for all $w\in A_v(R')$ we have
$$a^v_w(R') \le \sqrt{\frac{d_v a^v_v(R)}{R}} \le a \, .$$

Let $t_i$ enumerate the times the walk is in $A^v(R')$ and define
$$b_i=\sum_{j=0}^i a^v_{X_{t_i}}(R')$$
and
$$c_i=\ell^v_{t_{i+1}} \, .$$

\begin{claim} \label{martingale}
$c_i - b_i$ is a martingale (adapted to the filtration $\cF_{t_{i+1}}$).
\end{claim}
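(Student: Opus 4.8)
The plan is to verify the defining martingale identity directly. Viewing $c_i-b_i$ as a process indexed by $i$ relative to the filtration $\cF_{t_{i+1}}$, it suffices to show
$$\E(c_i-b_i \mid \cF_{t_i}) = c_{i-1}-b_{i-1} \, .$$
First I would settle the measurability bookkeeping: each $t_i$ is a stopping time for $\cF_t$, namely the $i$-th time the walk lies in $A_v(R')$, so $c_i=\ell^v_{t_{i+1}}$ is $\cF_{t_{i+1}}$-measurable while $b_i=\sum_{j=0}^{i}a^v_{X_{t_j}}(R')$ is already $\cF_{t_i}$-measurable; hence $c_i-b_i$ is adapted. Because $b_i$ is $\cF_{t_i}$-measurable and $b_i-b_{i-1}=a^v_{X_{t_i}}(R')$, the displayed identity reduces to the single statement
$$\E\big(\ell^v_{t_{i+1}}-\ell^v_{t_i} \,\big|\, \cF_{t_i}\big) = a^v_{X_{t_i}}(R') \, ,$$
whose left-hand side is the conditional expected number of visits to $v$ made during the excursion $[t_i,t_{i+1})$ away from the annulus.

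The heart of the argument is to evaluate this expectation. I would condition on $\cF_{t_i}$ and on the value $X_{t_i}=w\in A_v(R')$, and compare the excursion beginning at time $t_i$ with a simple random walk started at $w$ and run to its positive hitting time $\tau_v(R')$ of $A_v(R')$; by definition the latter makes $a^v_w(R')$ visits to $v$ in expectation. Decomposing by the first step out of $w$, which is uniform since $w\in B_v(R')\subseteq B_v(R)$ and the walk is $B_v(R')$-simple as $R'\le R$: a step back into $A_v(R')$ or outward to distance $R'+1$ begins an excursion on which $v$ is never reached, whereas a step inward to distance $R'-1$ begins an excursion confined to $B_v(R'-1)$ until it next meets $A_v(R')$, governed throughout by the uniform transition rule. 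Summing the contributions reproduces exactly the excursion decomposition defining $a^v_w(R')$, which is precisely the already-noted fact that $a^v_w(r)$ has the same value for every $B_v(r)$-simple walk.

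It remains to dispose of integrability and the base case. Integrability is immediate because the increments $\ell^v_{t_{i+1}}-\ell^v_{t_i}$ are nonnegative with conditional mean $a^v_w(R')\le m^v(R')<\infty$ (indeed $\ell^v(r)$ has exponential tails, cf.\ Lemma \ref{bounded_differences}), and the base case follows by setting $\ell^v_{t_0}=0$, legitimate since the walk starts outside $B_v(R)\supseteq B_v(R')$ and so cannot reach $v$ before first hitting $A_v(R')$. The one genuinely delicate point, which I expect to be the main obstacle, is justifying that the walk's \emph{arbitrary} behaviour outside $B_v(R')$ leaves the count of visits to $v$ within an excursion unchanged; this is what permits the reduction to a fresh simple walk via the strong Markov property, even though a $B_v(R')$-simple process need not be a genuine Markov chain. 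It rests on the topological observation that every path from outside $A_v(R')$ to $v$ must first cross $A_v(R')$, thereby ending the current excursion before any such visit can take place. Everything else is routine.
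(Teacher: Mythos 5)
Your proof is correct and follows essentially the same route as the paper, whose entire argument is the one-line increment identity $b_{i+1}-b_i = a^v_{X_{t_{i+1}}}(R') = \E(c_{i+1}-c_i \mid \cF_{t_{i+1}})$; you simply make explicit the details the paper leaves implicit, namely the measurability bookkeeping and the observation (already asserted after the definition of $a^v_w(r)$) that any visit to $v$ from outside must first cross $A_v(R')$, so the excursion count is insensitive to the walk's behaviour outside $B_v(R')$.
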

\begin{proof}
$b_{i+1}-b_i = a^v_{X_{t_i}}(r) = \E(c_{i+1}-c_i \mid \cF_{t_{i+1}})$.
\end{proof}

In words, we partition the walk into excursions, each of which start and ends at $A_v(R')$, and for each excursion we count the number of visits to $v$ and subtract the expectation.

Let $I$ be the first index such that either $b_I \ge K d_v - \eps$ or $\tcov^*(B_v(R+r)) \le  t_I$. Obviously, this is a stopping time and also $b_I \le K d_v +1$ since $a^v_w(R')\le 1$, for all $w\in A_v(R')$.

\begin{claim} \label{no_visit}
$$\P(c_I=0) \ge e^{-3 (K d_v +1)} \ .$$
\end{claim}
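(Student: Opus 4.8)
The plan is to recognize Claim \ref{no_visit} as a direct instance of Lemma \ref{bit_process}, where the underlying bit-process records whether $v$ is visited during each excursion. First I would define, for each excursion index $i$, the indicator
\be x_i = \one\{\ell^v_{t_{i+1}} > \ell^v_{t_i}\} \in \{0,1\} \ee
that the walk visits $v$ at least once during the $i$-th excursion, i.e.\ during the stretch of trajectory between the consecutive visits $t_i$ and $t_{i+1}$ to $A_v(R')$. Since $c_I = \ell^v_{t_{I+1}}$ counts all visits to $v$ up to the end of the $I$-th excursion, the event $\{c_I = 0\}$ is exactly the event that $x_i = 0$ for every excursion $i$ up to $I$ (there being $I+1$ of them, starting from the first entry into $A_v(R')$). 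Thus the claim reduces to a lower bound on the probability that this bit-process is identically zero up to the stopping index $I$.

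Next I would verify the two hypotheses of Lemma \ref{bit_process} for $p_i = \E(x_i \mid \cF_{t_i})$. The expected number of visits to $v$ during a single excursion started from $X_{t_i} \in A_v(R')$ is precisely $a^v_{X_{t_i}}(R')$, so the probability of \emph{at least one} visit satisfies $p_i \le a^v_{X_{t_i}}(R')$. Combined with the bound $a^v_w(R') \le a$ for all $w \in A_v(R')$ established just before the claim, this gives $p_i \le a$, and choosing $a \le \tfrac12$ secures the hypothesis $p_i \le \tfrac12$. For the sum, the same inequality yields
\be \sum_{i \le I} p_i \;\le\; \sum_{i \le I} a^v_{X_{t_i}}(R') \;=\; b_I \;\le\; K d_v + 1, \ee
using the definition of $I$ together with $a^v_w(R') \le 1$.

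Finally, since $x_i$ is measurable with respect to $\cF_{t_{i+1}}$ and $p_i$ is its one-step conditional expectation for this filtration, and since $I$ is a stopping time for it (its value depends only on $b_0,\dots,b_i$ and on whether $\tcov^*(B_v(R+r)) \le t_i$, all of which are $\cF_{t_i}$-measurable), Lemma \ref{bit_process} applies with $\tau = I$ and total mass $K d_v + 1$, giving $\P(\forall_{i\le I}\, x_i = 0) \ge e^{-3(K d_v + 1)}$, which is the claim. The probabilistic content is entirely delegated to Lemma \ref{bit_process}; the only real work is the bookkeeping—matching the excursion indexing to the lemma's sum and confirming the measurability/stopping-time conditions—so that the hypotheses hold verbatim. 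I expect that bookkeeping to be the sole potential source of slips, with no deeper obstacle.
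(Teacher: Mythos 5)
Your proof is correct and is essentially the paper's own argument: bound the per-excursion probability of hitting $v$ by $a^v_{X_{t_i}}(R')$, sum these to get $b_I \le K d_v + 1$, and invoke Lemma \ref{bit_process} with $\tau = I$. You are in fact slightly more careful than the paper, which leaves implicit both the requirement $a \le \tfrac12$ (needed for the hypothesis $p_i \le \tfrac12$) and the verification that $I$ is a stopping time for the excursion filtration.
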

\begin{proof}
The probability to hit $v$ between $t_i$ and $t_{i+1}$ is at most $a^v_{X_{t_i}}(R')$ and $\sum_{i=0}^I a^v_{X_{t_i}}(R')= b_I \le K d_v +1$. The claim now follows by Lemma \ref{bit_process}.
\end{proof}

\begin{claim} \label{small_covering_prob}
$$\P\big(\tcov^*(B_v(R+r)) \le t_I\big) \le \eps$$
\end{claim}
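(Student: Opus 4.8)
The plan is to trap the event $E=\{\tcov^*(B_v(R+r))\le t_I\}$ between a covering-forced lower bound on $b_I$ and the upper bound on $b_I$ that the stopping rule already supplies, so that $E$ can occur only when the concentration estimate of Lemma \ref{weak_concentration} fails.

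First I would establish the combinatorial heart of the claim: that covering $B_v(R+r)$ forces many visits to the sphere $A_v(R')$. For every $w\in A_v(R')$ we have $\mathrm{dist}(v,w)=R'\le R$, hence $B_w(r)\subseteq B_v(R+r)$ and therefore $\tcov^*(B_w(r))\le \tcov^*(B_v(R+r))$ (covering and leaving the larger ball entails covering and leaving the smaller). On $E$ this yields $\tcov^*(B_w(r))\le t_I$ for every such $w$, so if $m_w$ denotes the number of visits to $w$ among $X_{t_0},\dots,X_{t_I}$ then $m_w\ge \ell^w(r)$. Weighting by $a^v_w(R')$ and recalling the definition of $b_I$ gives, on $E$,
\[
b_I=\sum_{w\in A_v(R')} m_w\,a^v_w(R')\ \ge\ \sum_{w\in A_v(R')}\ell^w(r)\,a^v_w(R').
\]

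Next I would bound the last sum below via Lemma \ref{weak_concentration} applied to $S=A_v(R')$ with the probability vector $\tilde a_w=a^v_w(R')/Z$, where $Z=\sum_w a^v_w(R')$; note the walk is $B_S(r)$-simple started outside $B_S(r)$ since $B_S(r)\subseteq B_v(R'+r)\subseteq B_v(R+r)$. Lemma \ref{reversibility} does the crucial bookkeeping: $\sum_w\tilde a_w d_w=d_v/Z$, and, since $d_w\ge 1$, also $Z\le\sum_w d_w a^v_w(R')=d_v$. The bound $a^v_w(R')\le a$ from Lemma \ref{small_expectations} (with $R$ taken large enough) keeps $\max_w\tilde a_w$ below the threshold $a(r,\eps')$ demanded by Lemma \ref{weak_concentration} for a small auxiliary parameter $\eps'$, which then gives, with probability at least $1-\eps'$,
\[
\sum_w a^v_w(R')\,\ell^w(r)\ =\ Z\sum_w\tilde a_w\ell^w(r)\ \ge\ Z\Big(K\tfrac{d_v}{Z}-\eps'\Big)\ =\ Kd_v-Z\eps'\ \ge\ Kd_v-d_v\eps'.
\]

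Finally I would collide the two bounds: by minimality of $I$ one has $b_{I-1}<Kd_v-\eps$, and since every increment $a^v_{X_{t_I}}(R')\le a$, always $b_I<Kd_v-\eps+a$; on $E$ intersected with the concentration event this forces $Kd_v-d_v\eps'\le b_I<Kd_v-\eps+a$, i.e. $\eps<d_v\eps'+a$, which is impossible once $\eps'$ and $a$ are chosen with $d_v\eps'+a\le\eps$ (both are free—$\eps'$ via the parameter fed to Lemma \ref{weak_concentration}, $a$ via enlarging $R$ in Lemma \ref{small_expectations}—and $d_v\le D$ makes $\eps'\le\eps/(2D)$ suffice uniformly). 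Thus $E$ entails failure of the concentration estimate, so $\P(E)\le\eps'\le\eps$. I expect the main obstacle to be keeping this accounting honest across the normalization: the covering step controls the \emph{unnormalized} sum $\sum_w a^v_w(R')\ell^w(r)$, while Lemma \ref{weak_concentration} speaks about a probability vector, so de-normalizing reintroduces the factor $Z$ into the deviation—and it is precisely the reversibility bound $Z\le d_v\le D$ that stops this from swamping the slack $\eps$, so that the naive single-$\eps$ choice must be refined to a smaller auxiliary $\eps'$.
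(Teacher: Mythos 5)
Your proof is correct and follows the same route as the paper's: reduce to Lemma \ref{weak_concentration} applied to $S=A_v(R')$ with the weights $a^v_w(R')$, using that covering and exiting $B_v(R+r)$ forces covering and exiting each $B_w(r)$ for $w\in S$ (so that $b$ has accumulated at least $\sum_w a^v_w(R')\,\ell^w(r)$ by that time), the identity $\sum_w d_w a^v_w(R')=d_v$ from Lemma \ref{reversibility}, and the definition of the stopping index $I$. The one substantive difference is your normalization step, and it is to your credit: Lemma \ref{weak_concentration} as stated requires $\{a_w\}$ to be a probability distribution, while $Z=\sum_w a^v_w(R')$ is generally not $1$ (reversibility only gives $d_v/D\le Z\le d_v$), so the paper's verbatim application of the lemma is loose at exactly the point you flagged. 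Your repair --- run the lemma at level $\eps'$ on $\tilde a_w=a^v_w(R')/Z$, de-normalize via $Z\le d_v\le D$, and close the collision using $\eps'\le\eps/(2D)$ together with a cap $a\le\eps/2$ obtained by enlarging $R$ --- works; note that to verify the hypothesis $\max_w\tilde a_w\le a(r,\eps')$ you also implicitly need the lower bound $Z\ge d_v/D$, which again follows from Lemma \ref{reversibility} together with $d_w\le D$. The cost is a retuning of the upstream constants $a$ and $R$ in the proof of Lemma \ref{mainlem}, which is harmless since nothing downstream depends on their specific values. An alternative, slightly cheaper repair is to observe that the proof of Lemma \ref{weak_concentration} uses the probability-distribution hypothesis only in the final step of the variance estimate, $\var(M)\le aD^{r+1}N\sum_{v\in S}a_v$, so the lemma holds verbatim for any nonnegative weights with $\sum_{v\in S}a_v\le D$ once the bound is weakened to $aD^{r+2}N$; with that version both the paper's unnormalized application and your argument, without the auxiliary $\eps'$, go through directly.
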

\begin{proof}
Obviously, $\tcov^*(B_v(R+r)) \ge \tcov^*(B_S(r))$ where $S=A_v(R')$. Let $a_w=a^v_w(R')\le a$ by our assumption on $R'$. Hence, by Lemma \ref{weak_concentration} and the choice of $a$ we have
$$\P\big(\sum_{w\in S} a_w \ell^w(r) < K \sum_{w\in S} a_w d_w -\eps\big) < \eps$$
which implies
$$\P\big(b_{\tcov^*(B_v(R+r))} < K d_v - \eps\big) < \eps$$
and the claim follows by the definition of $I$.
\end{proof}

\begin{claim} \label{Cauchy-Schwartz}
$$\E(c_I) \ge (K d_v - \eps)(1-\eps) \, .$$
\end{claim}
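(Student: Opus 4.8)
The plan is to read off the inequality from the martingale of Claim~\ref{martingale} by optional stopping, and then bound the resulting expectation from below using the stopping rule together with Claim~\ref{small_covering_prob}. The key identity I aim for is $\E(c_I)=\E(b_I)$; granting this, the definition of $I$ and the small covering probability do the rest.

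First I would record that the martingale $c_i-b_i$ has mean zero. Since the walk starts outside $B_v(R)\supseteq A_v(R')$, there are no visits to $v$ before $t_0$, so $c_0=\ell^v_{t_1}$ counts exactly the visits to $v$ during the first excursion and $\E(c_0\mid\cF_{t_0})=a^v_{X_{t_0}}(R')=b_0$; hence $\E(c_0-b_0)=0$. Both $c_i=\ell^v_{t_{i+1}}$ and $b_i$ are nondecreasing in $i$, and, as already noted after the definition of $I$, $b_{i\wedge I}\le Kd_v+1$. I would then apply optional stopping to the bounded index $I\wedge n$ to get $\E(c_{I\wedge n})=\E(b_{I\wedge n})$ for every $n$, and let $n\to\infty$: the right side converges to $\E(b_I)\le Kd_v+1$ by bounded convergence, while the left side converges to $\E(c_I)$ by monotone convergence since $c_{I\wedge n}\uparrow c_I$. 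This gives $\E(c_I)=\E(b_I)$ and, in particular, that $\E(c_I)$ is finite.

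It then remains to show $\E(b_I)\ge(Kd_v-\eps)(1-\eps)$. By the definition of $I$, on the event $\{\tcov^*(B_v(R+r))>t_I\}$ the stopping cannot have been caused by the covering condition, so it was caused by the first condition, giving $b_I\ge Kd_v-\eps$ there; that is, $\{\tcov^*(B_v(R+r))>t_I\}\subseteq\{b_I\ge Kd_v-\eps\}$. Claim~\ref{small_covering_prob} bounds the complement of the left-hand event by $\eps$, so $\P(b_I\ge Kd_v-\eps)\ge 1-\eps$. Since $b_I\ge 0$ always, $\E(b_I)\ge(Kd_v-\eps)\,\P(b_I\ge Kd_v-\eps)\ge(Kd_v-\eps)(1-\eps)$, and combining with $\E(c_I)=\E(b_I)$ yields the claim.

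The main obstacle is the rigorous passage to the limit in optional stopping, i.e. the integrability of $c_I$ and the finiteness of $I$. I expect this to be handled entirely by the monotonicity above: because $b_{i\wedge I}$ is uniformly bounded by $Kd_v+1$, monotone convergence forces $\E(c_I)=\E(b_I)<\infty$ without any separate uniform-integrability estimate, and $I<\infty$ almost surely because $b_i$ grows without bound unless the walk eventually covers $B_v(R+r)$, so one of the two stopping conditions is triggered in either case.
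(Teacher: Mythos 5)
Your proof is correct and follows essentially the same route as the paper's: optional stopping applied to the martingale $c_i-b_i$ from Claim~\ref{martingale} gives $\E(c_I)=\E(b_I)$, and then $\E(b_I)\ge (Kd_v-\eps)\,\P(b_I\ge Kd_v-\eps)\ge (Kd_v-\eps)(1-\eps)$ using nonnegativity of $b_I$, the definition of $I$, and Claim~\ref{small_covering_prob}. The only difference is one of rigor, not of route: the paper invokes optional stopping in a single line, while you justify the passage to the unbounded stopping time $I$ via truncation at $I\wedge n$, bounded convergence for $b_{I\wedge n}$, and monotone convergence for $c_{I\wedge n}$, which is a welcome elaboration of the same argument.
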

\begin{proof}
$\E(c_I)=\E(b_I)$ by Lemma \ref{martingale} and since $I$ is a stopping time.
$$\E(b_I) \ge (K d_v -\eps)\P(b_I \ge K d_v -\eps) \ge (K d_v - \eps) (1-\eps)$$
by claim \ref{small_covering_prob} and the definition of $I$.
\end{proof}

Summing it all up, the expected number of visits to $v$ before $\tcov^*(B_v(R+r))$ is at least the expected number of these visits which occur before $t_I$ plus the probability that $v$ has not been visited at all by time $t_I$ (in which case we need to visit it at least once). Lemma \ref{Cauchy-Schwartz} and Lemma \ref{no_visit} bound these from below yielding
\begin{eqnarray*}
\E(\ell^v(R+r)) &\ge& \E(c_I) + \P(c_I=0)\\
&\ge& (K d_v - \eps)(1-\eps) + e^{-3 (K d_v +1)}\\
&\ge& K d_v + e^{-3 K d_v -4}
\end{eqnarray*}
for $\eps$ small enough.
\end{proof}

\medskip
\noindent {\bf Acknowledgements:} The authors thank Ariel Yadin for useful discussions.

\medskip

\noindent {\bf Legend:}

$B_v(r)$ - the ball of radius $r$ around $v$

$A_v(r)=B_v(r)\setminus B_v(r-1)$ - the annulus of radius $r$ around $v$

$\tau_v(r)$ the hitting time of $A_v(r)$

$\ell^v_t$ the number of visits to $v$ before time $t$

$\ell^v(r)$ the number of visits to $v$ before time $\tau_v(r)$

$\tcov(S)$ the cover time of $S$

$\tcov$ the cover time of the graph

$\tcov^*(S)$ the time to cover and exit $S$

$\tcov^*$ the time to cover the graph and exit $B_{X_{\tcov}}(2r)$


\begin{bibdiv}
\begin{biblist}

\bib{Aldous}{article}{
   author = {Aldous, David J.},
   affiliation = {Department of Statistics University of California 94720 Berkeley California},
   title = {Threshold limits for cover times},
   journal = {Journal of Theoretical Probability},
   publisher = {Springer Netherlands},
   issn = {0894-9840},
   keyword = {Mathematics and Statistics},
   pages = {197-211},
   volume = {4},
   issue = {1},
   url = {http://dx.doi.org/10.1007/BF01047002},
   note = {10.1007/BF01047002},
   year = {1991}
}

\bib{AF}{book}{
  author = {Aldous, David},
  author = {Fill, James}
  year = {1999}
  title = {Reversible Markov chains and random walks on graphs}
  note = {Manuscript available at http://www.stat.berkeley.edu/~aldous/RWG/book.html}
}

\bib{Feige}{article}{
 author = {Feige, Uriel},
 title = {A tight lower bound on the cover time for random walks on graphs},
 journal = {Random Struct. Algorithms},
 volume = {6},
 number = {4},
 year = {1995},
 issn = {1042-9832},
 pages = {433--438},
 doi = {http://dx.doi.org/10.1002/rsa.3240060406},
 publisher = {John Wiley \& Sons, Inc.},
 address = {New York, NY, USA},
 }

\bib{LPW}{book}{
  author = {Levin, David A.},
  author = {Peres, Yuval},
  author = {Wilmer, Elizabeth L.},
  publisher = {American Mathematical Society},
  title = {Markov chains and mixing times},
  year = {2006},
  keywords = {markovchains probabilitytheory textbook},
}

\bib{Z}{article}{
  author = {Zuckerman, David},
  title = {A Technique for Lower Bounding the Cover Time},
  booktitle = {SIAM J. Disc. Math},
  year = {1992},
  pages = {81--87},
  publisher = {ACM Press}
}

%
%
%
%
%
%
%
%
%
\end{biblist}
\end{bibdiv}


\end{document}